\def\be{\begin{equation}}
\def\eea{\end{eqnarray}}
\def\ee{\end{equation}}
\def\bea{\begin{eqnarray}}
\def\ea{\end{array}}
\def\ba{\begin{array}}
\newcommand{\exval}[1]{\mbox{$\langle \, {#1}\, \rangle$}}
\newcommand{\bel}[1]{\begin{equation}\label{#1}}
\newcommand{\deriv}[1]{\mbox{$\displaystyle\frac{d}{d{#1}}$}}
\newcommand{\pderiv}[1]{\mbox{$\displaystyle\frac{\partial}{\partial{#1}}$}}
\theoremstyle{plain}
  \newtheorem{theorem}[subsection]{Theorem}
  \newtheorem{conjecture}[subsection]{Conjecture}
  \newtheorem{proposition}[subsection]{Proposition}
  \newtheorem{lemma}[subsection]{Lemma}
  \newtheorem{Distance conjecture}[subsection]{Distance Conjecture}
  \newtheorem{Bilinear distance conjecture}[subsection]{Bilinear Distance Conjecture}
  \newtheorem{Furstenburg problem}[subsection]{Furstenburg problem}
  \newtheorem{Discretized Furstenburg conjecture}[subsection]{Discretized Furstenburg 
  Conjecture}
  \newtheorem{Ring problem}[subsection]{Ring Problem}
  \newtheorem{Ring conjecture}[subsection]{Ring Conjecture}
  \newtheorem{Main theorem}[subsection]{Main Theorem}
  \newtheorem{Cauchy-Schwarz}[subsection]{Cauchy-Schwarz}
\theoremstyle{remark}
  \newtheorem{remark}[subsection]{Remark}
\theoremstyle{definition}
\begin{document}

\title[A solvability criterion for 
Navier-Stokes equations]
{A Solvability criterion  for  \\
Navier-Stokes equations
in  high dimensions}

\author{T. M. Viswanathan}
\address{  {Universidade Federal de Alagoas,
       Macei\'{o}--AL, CEP 57072-970, Brazil} }

\address{
{Consortium of the Americas for Interdisciplinary Science, 
University of New Mexico, 
800 Yale Blvd. NE, \\
Albuquerque, NM 87131, USA}}

\email{viswanathan.tenkasi@gmail.com}

\author{G. M. Viswanathan}
\address{
{Consortium of the Americas for Interdisciplinary Science, 
University of New Mexico, 
800 Yale Blvd. NE, \\
Albuquerque, NM 87131, USA}}

\address{  {Instituto de F\'{\i}sica, Universidade Federal de Alagoas,
       Macei\'{o}--AL, CEP 57072-970, Brazil} }

\email{Gandhi.Viswanathan@pq.cnpq.br}

\begin{abstract} 
We define the Ladyzhenskaya-Lions exponent $\alpha_{\rm \mbox{\tiny
    \sc l}} (n)=({2+n})/4$ for Navier-Stokes equations with
dissipation $-(-\Delta)^{\alpha}$ in ${\Bbb R}^n$, for all $n\geq 2$.
We review the proof of 
strong global solvability when $\alpha\geq \alpha_{\rm
  \mbox{\tiny \sc l}} (n)$, given smooth initial data.  If the
corresponding Euler equations for $n>2$ were to allow uncontrolled
growth of the enstrophy ${1\over 2} \|\nabla u \|^2_{L^2}$, then no
globally controlled coercive quantity is currently known to exist that
can regularize solutions of the Navier-Stokes equations for
$\alpha<\alpha_{\rm \mbox{\tiny \sc l}} (n)$. The energy is critical
under scale transformations only for $\alpha=\alpha_{\rm \mbox{\tiny
    \sc l}} (n)$.
\end{abstract}

\maketitle

\section{Introduction}

The Navier-Stokes equation for the velocity $u$ of an incompressible
fluid with viscous dissipation $-(-\Delta)^{\alpha}$ is given by
\bel{eq-ns}{\partial u \over \partial t} + u \cdot \nabla u +
\nabla p = -(-\Delta)^{\alpha} u, 
\ee 
where $u$ is a time-dependent divergence-free vector field in ${\Bbb
  R}^n$ with zero mean.  The pressure $p$ can be eliminated using
Leray projections, since it merely serves to ensure the divergence
free condition (see Equation (\ref{eq-p}) below).  Given the initial
condition
\bel{eq-init} u(x,0)=u_0(x) 
\ee 
where $u_0(x) \in C^{\infty}_c({\Bbb R}^n)$, the question is whether
or not solutions remain smooth.  

Olga Ladyzhenskaya~\cite{sk1,sk2}, in the 1960s, proved global
regularity for the two dimensional (\mbox{$n=2$}) parabolic
(\mbox{$\alpha=1$}) case.  For $n=3$ the challenge is to improve on
the well known exponent $\alpha\geq 5/4$ obtained long ago by
Jacques-Louis~Lions~\cite{lions1,lions2}, still the best known result
to date~\cite{new,katz,ms,epl}.  One decade ago, Mattingly and
Sinai~\cite{ms} gave an elementary proof of this result and showed
that there are no blow-ups and solutions remain smooth if $\alpha\geq
5/4$.  Later Katz and Pavlovi\'c~\cite{katz} extended the result by
proving additional statements, concerning the Hausdorff dimension of
the singular set at the time of first blow-up.  Recently, in the
context of theoretical physics, we have studied~\cite{epl} the problem
in higher dimensions, in order to gain a better understanding of
non-Newtonian turbulence and non-local anomalous diffusion of energy
density in the Fourier domain in hyper-dissipative incompressible
flows.

We draw the reader's attention to how the lowest value of the exponent
$\alpha$ known to ensure global regularity increases by $1/4=5/4-1$,
as the number of dimensions increases from $n=2$ to $n=3$. The
question that we address is whether or not this 1/4 increase per
dimension is a more general principle with deeper significance. Thus
motivated, we define the Ladyzhenskaya-Lions exponent
\bel{eq-lle}
\alpha_{\rm \mbox{\tiny \sc l}} (n)={2+n\over 4},~\forall~ n\geq 2 \;\;,
\ee
by linear extrapolation from $n=2$ and $n=3$.  We then show how to
generalize to all $n$ the seminal regularity results of Ladyzhenskaya
and Lions, starting with the following theorem:

\begin{theorem}\label{tm-1}
If $\alpha > \alpha_{\rm \mbox{\tiny \sc l}} (n)$ then one has strong global solvability for
the system (\ref{eq-ns}), (\ref{eq-init}) in ${\Bbb R}^n$.
\end{theorem}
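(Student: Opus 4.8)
The plan is to combine local-in-time well-posedness in a high Sobolev space with an a priori bound that rules out finite-time blow-up precisely when $\alpha>\alpha_{\rm \mbox{\tiny \sc l}}(n)$. First I would record the basic energy identity: pairing (\ref{eq-ns}) with $u$ in $L^2$, the pressure term drops by incompressibility and the transport term $\int (u\cdot\nabla u)\cdot u\,dx$ vanishes after integration by parts, leaving
\be
\tfrac12\frac{d}{dt}\|u\|_{L^2}^2 + \|(-\Delta)^{\alpha/2}u\|_{L^2}^2 = 0 ,
\ee
so that $\|u(t)\|_{L^2}$ is non-increasing and globally controlled. A quick scaling check---the map $u\mapsto u_\lambda(x,t)=\lambda^{2\alpha-1}u(\lambda x,\lambda^{2\alpha}t)$ leaves (\ref{eq-ns}) invariant and rescales the $L^2$ energy by $\lambda^{4\alpha-2-n}$---shows the energy is exactly scale-invariant when $\alpha=\alpha_{\rm \mbox{\tiny \sc l}}(n)$ and subcritical when $\alpha>\alpha_{\rm \mbox{\tiny \sc l}}(n)$; this is the structural reason to expect global regularity in the latter regime.

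Next, fixing $s$ large (say $s>\tfrac n2+1$), I would study the evolution of $\|u\|_{\dot H^s}^2$. Pairing (\ref{eq-ns}) with $(-\Delta)^s u$, the pressure again drops since $(-\Delta)^s u$ is divergence-free, and the dissipation supplies the good term $\|u\|_{\dot H^{s+\alpha}}^2$, so that
\be
\tfrac12\frac{d}{dt}\|u\|_{\dot H^s}^2 + \|u\|_{\dot H^{s+\alpha}}^2 = -\int (u\cdot\nabla u)\cdot(-\Delta)^s u\,dx =: N .
\ee
The one genuinely nonlinear step is the estimate of $N$. Using the divergence-free condition to cancel the top-order piece $\int u\cdot\nabla\bigl((-\Delta)^{s/2}u\bigr)\cdot(-\Delta)^{s/2}u\,dx=0$ together with a Kato-Ponce commutator bound, I expect $|N|\le C\,\|\nabla u\|_{L^\infty}\,\|u\|_{\dot H^s}^2$.

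The crux is then to convert $\|\nabla u\|_{L^\infty}$ and $\|u\|_{\dot H^s}$ into powers of the dissipation norm that can be absorbed. By Sobolev embedding $\|\nabla u\|_{L^\infty}\le C\|u\|_{\dot H^{n/2+1+\varepsilon}}$, and by Gagliardo-Nirenberg interpolation between $L^2$ and $\dot H^{s+\alpha}$,
\be
\|u\|_{\dot H^{n/2+1+\varepsilon}}\le C\|u\|_{L^2}^{1-\theta}\|u\|_{\dot H^{s+\alpha}}^{\theta},\qquad \|u\|_{\dot H^s}\le C\|u\|_{L^2}^{1-\mu}\|u\|_{\dot H^{s+\alpha}}^{\mu},
\ee
with $\theta=(n/2+1+\varepsilon)/(s+\alpha)$ and $\mu=s/(s+\alpha)$. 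Hence $|N|\le C\|u\|_{L^2}^{a}\,\|u\|_{\dot H^{s+\alpha}}^{\theta+2\mu}$ for some $a\ge0$, and a direct computation gives $\theta+2\mu<2$ exactly when $n/2+1+\varepsilon<2\alpha$, i.e.\ when $\alpha>\alpha_{\rm \mbox{\tiny \sc l}}(n)$ after choosing $\varepsilon$ small. With the total power on the dissipation norm strictly below $2$, Young's inequality absorbs half of $\|u\|_{\dot H^{s+\alpha}}^2$ into the left-hand side and leaves a remainder controlled by a power of the bounded energy, yielding
\be
\frac{d}{dt}\|u\|_{\dot H^s}^2 + \|u\|_{\dot H^{s+\alpha}}^2 \le C\bigl(\|u_0\|_{L^2}\bigr).
\ee
Thus $\|u(t)\|_{\dot H^s}$ grows at most linearly and stays finite on every bounded interval.

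Finally I would invoke standard local well-posedness in $H^s$ together with a continuation criterion (a finite-time singularity forces $\|u(t)\|_{H^s}\to\infty$): since $\|u(t)\|_{H^s}^2\sim\|u\|_{L^2}^2+\|u\|_{\dot H^s}^2$ remains bounded on finite intervals, the local smooth solution extends globally. The main obstacle is the nonlinear estimate for $N$: everything hinges on the commutator and interpolation bookkeeping producing a dissipation-norm exponent \emph{strictly} less than $2$, and it is precisely the threshold $\theta+2\mu<2\Leftrightarrow\alpha>\alpha_{\rm \mbox{\tiny \sc l}}(n)$ that lets Young's inequality close---the borderline case $\alpha=\alpha_{\rm \mbox{\tiny \sc l}}(n)$ produces exponent exactly $2$ and would require a separate, more delicate argument.
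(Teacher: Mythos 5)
Your proposal is correct, and it closes at exactly the same threshold as the paper ($2\alpha-1>n/2$, i.e.\ $\alpha>\alpha_{\rm \mbox{\tiny \sc l}}(n)$), but the route through the key nonlinear estimate is genuinely different. The paper (following Katz--Pavlovi\'c) never uses a commutator: it bounds $\langle u\cdot\nabla u,(-\Delta)^\beta u\rangle$ by Cauchy--Schwarz as $\|u\cdot\nabla u\|_{H^\beta}\|u\|_{H^\beta}$, splits the product with H\"older ($1/2=1/p+1/q$), and pays for both factors with Sobolev embeddings into $H^{\alpha}$ and $H^{\alpha+\beta}$ --- that is the only place $n$ enters --- and then closes with Gronwall, using the time-integrated dissipation bound $\int_0^T\|u\|_{H^\alpha}^2\,dt\lesssim 1+T$ furnished by the energy identity. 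You instead exploit the divergence-free cancellation at top order plus Kato--Ponce to get $|N|\lesssim\|\nabla u\|_{L^\infty}\|u\|_{\dot H^s}^2$, interpolate every factor between the pointwise-bounded energy $\|u\|_{L^2}$ and the instantaneous dissipation norm $\|u\|_{\dot H^{s+\alpha}}$, and absorb by Young; your exponent bookkeeping ($\theta+2\mu<2$ iff $2\alpha>n/2+1+\varepsilon$) is right, no Gronwall is needed, and you get the slightly stronger conclusion that $\|u\|_{\dot H^s}^2$ grows at most linearly rather than with a Gronwall exponential factor. The trade-off: the paper's argument is more elementary (product estimates only, no commutator machinery), and its Gronwall step shows that mere $L^1_t$ integrability of $\|u\|_{H^\alpha}^2$ suffices; your version makes transparent that only the $L^\infty_t L^2_x$ energy bound is consumed, and your explicit appeal to local well-posedness plus a continuation criterion fills in a step the paper leaves implicit (``invoking Gronwall's inequality gives global solvability''). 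Your opening scaling check also reproduces, in a different parametrization, the paper's concluding theorem that the energy is scale-critical exactly at $\alpha=\alpha_{\rm \mbox{\tiny \sc l}}(n)$.
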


In fact, we give two different proofs of this theorem.  Although it
may have gone somewhat unnoticed, Lions himself arrived at the result
40 years ago \cite{lions2} (see also the paper by Guermond and
Prudhomme \cite{new}).
Our own physical argument~\cite{epl} used
$L^1$ estimates in the Fourier domain to bound the rate of growth of
maximal norms of the velocity $u$ and its spatial derivatives. Since
then, we have been able to generalize to arbitrary $n$ an elegant
proof of Katz and Pavlovi\'c~\cite{katz} for the case $n=3$, based on
$L^2$ estimates.  We present first the latter proof, because it uses
standard methods of functional analysis.  We then present the second
proof, by reformulating in mathematical terms the physical arguments
given in ref.~\cite{epl}.  This second proof is longer, but which
nevertheless sheds further insight on how and why the cascades of
energy (and vorticity) continue to bedevil attempts to prove or
disprove global regularity for the important and well known special
case $\alpha=1$ and $n=3$.  For higher dimensions $n>3$ the
nonlinearity can lead to even more violent cascade of energy, such
that it becomes progressively harder to bound the uncontrolled growth
of the enstrophy, defined as $ {1\over 2} \| \nabla
u\|^2_{L^2}=-{1\over 2} \exval{u,\Delta u} $.

The two methods of proof, though equivalent, shed light on different
aspects of the problem and complement one another.  Taken together,
they seem to suggest that in higher dimensions $n\geq 3$ the
Ladyzhenskaya-Lions exponent represents a genuine critical
point~\cite{epl}.  Both methods rely ultimately (explicitly or
implicitly) on conservation of energy to regularize the solutions.
Solutions of the Navier-Stokes equations on a time interval $[0,T]$
satisfy
$$\|u(.,T)\|_{L^2}^2 = \|u_0\|_{L^2}^2 - \int_{0}^T \langle
(-\Delta)^{\alpha} u,u \rangle.$$ The second term on the right is
called the dissipation term.  The closely related Euler equations for
inviscid flow do not have the dissipation term.

We first remark on notation.  We will use $\langle,\rangle$ always to
denote an $L^2$ inner product in space. We follow the notation 
used in ref.~\cite{katz} throughout this paper for the expression $A
\lesssim B$, to mean $A \leq CB$ where $C$ is some constant.  This
constant may depend on $T$, the norms of the initial values, and on an
$\epsilon$ which we keep fixed throughout this paper. 

\subsection*{Acknowledgements}
We thank CNPq (Process 201809/2007-9) for funding. We thank Jean-Luc
Guermond, H. D. Jennings and V. Milman for helpful information.  We
also thank V.~M.~Kenkre and the Consortium of the Americas for
Interdisciplinary Science for their hospitality.

\section{Proof based on $L^2$ estimates}

The following is a generalization to ${\Bbb R}^n$ of the known proof
for $n=3$.  We briefly sketch the outline of the argument.  By taking
suitable inner products, one can bound the rate of growth of $L^p$
Sobolev norms of $u$. Then, by invoking a carefully chosen Sobolev
inequality, one can re-express these bounds in terms of $L^2$ norms,
at which point the rest of the proof proceeds conventionally and one
recovers global solvability.

The dependence on $n$ enters the final result solely through the
Sobolev inequality.  Most importantly, the dependence on $n$ of
$\alpha_{\rm \mbox{\tiny \sc l}} (n)$ offers a new and different
perspective on the well known explanation of why the parabolic
Navier-Stokes equations have global regularity in two but not in three
space dimensions.  Due to the close relationship between the Euler and
Navier-Stokes equations, the literature does not always make clear the
significant differences between them in two dimensions.  One
explanation argues that the for $n=2$ vorticity is conserved, whereas
for $n=3$ vorticity and hence enstrophy can grow.  This argument
actually misses the point for the Navier-Stokes case, since in two
dimensions the Euler equation itself has smooth solutions, rendering
dissipation irrelevant.  Plausible explanations have their basis on
energy considerations. For $n=2$ the direct energy cascade --- if it
counterfactually (i.e., hypothetically) existed in the Euler case ---
would remain weak enough that dissipation with $\alpha=1$ can control
it. But for $n>2$ the energy can cascade more violently, such that
dissipation with $\alpha=1$ cannot necessarily control it.

In this context, the following proof provides a clear interpretation,
viz. that Sobolev embedding becomes more ``costly'' for larger $n$.
By costly, we mean that the number of derivatives necessary for the
embedding increases with $n$.  Here, energy enters the picture
(via the norm $\| u \|^2_{L ^2}$), and the role of $n$ becomes clear.

\begin{proof}[Proof of Theorem \ref{tm-1}]

Let $H^{\beta}$ and  $W^{\beta,p}$ denote the $L^2$
and $L^p$ Sobolev spaces, respectively,  over ${\Bbb R}^n$ with
$\beta$ derivatives.
Taking the inner product by pairing 
Equation \eqref{eq-ns} with $u$, we get 
\bel{eq-energy} 
{1 \over 2} {\deriv{t}{ \|u\|_{L^2}^2}}
= -\| (-\Delta)^{{\alpha \over 2}} u\|_{L^2}^2 
\leq -\|u\|_{H^{\alpha}}^2 +\|u\|_{L^2}^2
\end{equation}
Hence, if the
solution $u$ remains smooth up to time $T$, we have the estimate
\bel{eq-easyestimate}
\int_0^T \|u\|_{H^{\alpha}}^2 dt \lesssim (1+T).
\end{equation}
We next pair Equation \eqref{eq-ns} with $(-\Delta)^{\beta}u$ in order to
estimate ${\partial(\|u\|_{H^{\beta}}^2) \over \partial t}$.
We obtain
\bel{eq-cleanpairing}{1 \over 2} {\deriv{t} \|(-\Delta)^{{\beta \over 2}}
u\|_{L^2}^2} + \langle u \cdot \nabla u,
(-\Delta)^{\beta} u \rangle = 
-\|(-\Delta)^{{\alpha + \beta \over 2}}u\|_{L^2}^2.
\end{equation}
We next estimate the nonlinear term \bel{eq-bal} \langle u \cdot
\nabla u,(-\Delta)^{\beta} u \rangle =\langle (-\Delta)^{{\beta \over
    2}}( u \cdot \nabla u), (-\Delta)^{{\beta \over 2}} u \rangle.
\ee Recall that $u$ is divergence free.  We can bound the absolute
value of the left hand side of Equation (\ref{eq-bal}) by
\bel{eq-bnd1} \| u \cdot \nabla u \|_{H^{\beta}} \|u\|_{H^{\beta}} \;
, \ee using the Cauchy-Schwarz inequality.  Next, we can apply the
following well known generalization~\cite{book-funcanal} of H\"older's
inequality:
$$
\|f_1 f_2 \|_{L^{p}} \leq \|f_1 \|_{L^{p_1}}  \|f_2 \|_{L^{p_2}} ,
\quad {1 \over p }= {1\over p_1} + {1\over p_2} \;\; .   
$$
So we can now bound (\ref{eq-bnd1}) by
\bel{eq-alt1}
\|u\|_{W^{0,p}} \|u\|_{W^{\beta+1,q}} \|u\|_{H^{\beta}} \;\; ,
\ee
or
equivalently, following ref~\cite{katz}, by
\bel{eq-alt2}
\|u\|_{W^{1,p}} \|u\|_{W^{\beta,q}} \|u\|_{H^{\beta}}  \;\; ,
\ee
with 
\bel{eq-sobolev-cond}
{1 \over 2} ={1 \over p} + {1 \over q}  \;\; . 
\ee

Now comes the crucial step, where $n$ enters
the picture. We twice invoke the
Sobolev embedding theorem to obtain, 
\bel{eq-sobolev}
\|u\|_{W^{0,p}} \|u\|_{W^{\beta+1,q}} \|u\|_{H^{\beta}} 
\lesssim \|u\|_{H^{\alpha}} \|u\|_{H^{\alpha+\beta}}
\|u\|_{H^{\beta}}\;\;,
\ee
where, using (\ref{eq-sobolev-cond}), the embedding is 
seen to be 
completely
continuous provided that
\bel{eq-condition}
2 \alpha-1>n/2 \;\; .
\ee
The embedding ``costs'' $n/2$ derivatives whereas  we 
can 
``spend''
 $2\alpha-1$ derivatives.  Remarkably, this is the only step of the
 argument involving $n$.  We note that the same bound  is obtained by
 using (\ref{eq-alt2}) instead of (\ref{eq-alt1}) in
 (\ref{eq-sobolev}).

 The rest of the argument proceeds straightforwardly.
From  Cauchy-Schwarz, we get,  
$$\|u\|_{H^{\alpha}} \|u\|_{H^{\alpha+\beta}}
\|u\|_{H^{\beta}} \leq \delta \|u\|_{H^{\alpha+\beta}}^2
+ {1 \over \delta}  \|u\|_{H^{\alpha}}^2 \|u\|_{H^{\beta}}^2.$$
Combining this with Equation  \eqref{eq-cleanpairing}, we get
$$ {\deriv{t} \|u\|_{H_{\beta}}^2}
\lesssim \|u\|_{H^{\alpha}}^2 \|u\|_{H^{\beta}}^2 + \|u\|_{L^2}^2.$$
Together  with \eqref{eq-easyestimate} and invoking 
Gronwall's inequality gives global solvability. 
The sole sufficient  condition (\ref{eq-condition}) can be rewritten as, 
\bel{eq-alpha-condition} 
\alpha> {2+n\over 4} \;\; .  
\ee
\end{proof}

\section{Proof based on $L^1$ (or $L^\infty$) estimates}

We next give a second proof corresponding to the original argument
presented in our earlier study, ref.~\cite{epl}.  The latter argument
has its basis on the analogy or connection~\cite{epl} between the
Navier-Stokes equation and nonlocal Fokker-Planck equations for
describing anomalous diffusion of ensembles of random walkers, such as
found in L\'evy flights~\cite{sokolov}.  Fourier transforming the
Euler equation converts the nonlinear term into a non-local one.
Taking an inner product of the Fourier transformed equation with the
Fourier transform of $u$, one obtains an equation which conserves
energy, in a manner analogous to how Fokker-Planck equations conserve
probability.  A well known fact about fractional or nonlocal
Fokker-Planck equations from the study of L\'evy flights concerns the
relationship between fat tails in the probability density function and
smoothness of the characteristic functions: the characteristic
function may not be smooth for fat tailed probability density
functions having diverging moments.  Dissipation in the Navier-Stokes
equations, of course, makes kinetic energy decay and adds an element
of exponential damping to the Fourier transformed equation.  We can
use this damping to control and bound the growth of suitable norms.
For convenience we chose to study the $L^1$ norms involving the
Fourier transform of $u$.  The bounds then allow us to obtain
$L^\infty$ estimates for $u$ and its derivatives.  Hence, we actually
work in the classical H\"older space rather than in Sobolev space.

We first briefly sketch the outline of the proof.  We take the Fourier
transform of the Navier-Stokes equation and then bound the rate of
growth of the Fourier transform $\tilde u(k,.)$ of $u(x,.)$, where $k$
is the Fourier conjugate of $x$.  Using this inequality, we then bound
the ``moments'' of the absolute value of $\tilde u$.  These
``moments'' are defined in a manner analogously to statistical moments
of probability density functions.  Conservation of energy plays an
explicit and important role in the argument, because it leads to
estimates of lower order derivatives of $u$ in terms of higher order
derivatives, which leads to the final result.

One aspect of the problem which becomes more clear in this argument
concerns the crucial role played by scale invariance.  We will show in
the concluding remarks that the critical value $\alpha_{\rm \mbox{\tiny
  \sc l}} (n)=(2+n)/4$ for the marginal case corresponds to the scenario
where the dissipation effects and the nonlinear effects (or nonlocal
effects in the Fourier domain) have the same scaling.  The number of
dimensions $n$ enters the picture in the step involving conservation
of energy.  This longer proof provides additional and complementary
insight into how and why conservation of energy has different effects
for different $n$. (In fact, both the divergence-free condition of
incompressibility and conservation of energy become less relevant as
$n$ tends to infinity.)

In what follows $u_i$ denotes the Cartesian component $i=1,2,\ldots n$
of $u$ and $|k|$ is the length of the $n$-dimensional vector $k$.

\begin{proposition}\label{prop-main}
The
Fourier transform $\tilde u_i(k,.)$ of $u_i(x,.)$ 
  satisfies
\bea
 \deriv{t} \big\| {|k|^m \tilde{u}_i }\big\|_{L^1}    &\leq&    
 \sum_j^n  
     \sum_\ell^m  {m \choose \ell}
\big\|{
|k|^\ell \tilde{u}_j} \big\|_{L^1}  \big\|{|k|^{m-\ell+1} \tilde{u}_i} \big\|_{L^1}    \nonumber \\ & &  -  \big\| {|k|^{{2\alpha}+m},{\tilde{u}_i}} \big\|_{L^1} + C_{i,m}(t) \;\;\label{moments} .
  \eea 
for any integer $m\geq 0$, 
where $C_{i,m}$ depend only on pressure $p$.
\end{proposition}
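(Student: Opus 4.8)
The plan is to derive \eqref{moments} directly from the Fourier transform of the equation of motion for each Cartesian component $u_i$. First I would take the Fourier transform of the $i$-th component of \eqref{eq-ns}, using that the dissipation is the Fourier multiplier $\widehat{(-\Delta)^{\alpha}u_i}=|k|^{2\alpha}\tilde u_i$, that each derivative $\partial_j$ becomes multiplication by $ik_j$, and that the product $u_j\,\partial_j u_i$ in the advection term transforms into a convolution. With the Fourier convention in which products map to convolutions without an extra prefactor, this yields
\[
\partial_t \tilde u_i(k)= -|k|^{2\alpha}\tilde u_i(k)-\sum_j^n \int \tilde u_j(k-k')\,(ik_j')\,\tilde u_i(k')\,dk' - ik_i\,\tilde p(k),
\]
where the last term is the pressure (Leray) contribution. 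This identity already exhibits the three structures that become the three terms on the right-hand side of \eqref{moments}.

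Next I would differentiate the moment $\big\||k|^m\tilde u_i\big\|_{L^1}=\int|k|^m\,|\tilde u_i(k)|\,dk$ under the integral sign. The delicate point is the time derivative of the modulus of the complex function $\tilde u_i$: on the set where $\tilde u_i\neq 0$ one has $\partial_t|\tilde u_i|=\mathrm{Re}\big(\overline{\tilde u_i}\,\partial_t\tilde u_i\big)/|\tilde u_i|$. Here the dissipative term contributes \emph{exactly} $-|k|^{2\alpha}|\tilde u_i|$, since its phase is aligned with that of $\tilde u_i$; this is the origin of the favorable sign of $-\big\||k|^{2\alpha+m}\tilde u_i\big\|_{L^1}$. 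By contrast, I would bound the nonlinear and pressure contributions merely in absolute value via $\mathrm{Re}(z)\le|z|$. Smoothness of $u$ up to time $T$ makes all integrals finite and legitimizes differentiation under the integral; to handle the nondifferentiability of $|\cdot|$ at the origin one can regularize by $\sqrt{|\tilde u_i|^2+\varepsilon^2}$ and let $\varepsilon\to0$.

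It then remains to transport the weight $|k|^m$ onto the convolution. Bounding the nonlinear term by absolute values and using $|k_j'|\le|k'|$, I would apply the triangle inequality $|k|\le|k-k'|+|k'|$ together with the binomial expansion
\[
|k|^m\le\big(|k-k'|+|k'|\big)^m=\sum_{\ell=0}^m \binom{m}{\ell}\,|k-k'|^{m-\ell}\,|k'|^{\ell},
\]
which is precisely where the binomial coefficients in \eqref{moments} originate. Inserting this and changing variables $\xi=k-k'$, Fubini's theorem factorizes each double integral into the product $\big\||k|^{m-\ell}\tilde u_j\big\|_{L^1}\big\||k|^{\ell+1}\tilde u_i\big\|_{L^1}$; relabeling $\ell\mapsto m-\ell$ and using $\binom{m}{\ell}=\binom{m}{m-\ell}$ brings this into the stated form. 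Finally I would collect the pressure contribution into the time-dependent quantity $C_{i,m}(t)=\int|k|^m\,|k_i|\,|\tilde p(k,t)|\,dk$, which by construction depends only on $p$.

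The main obstacle I anticipate is not the algebra of the nonlinear estimate, which is routine once the binomial trick is in place, but the clean extraction of the dissipative sign. One must argue carefully that passing the time derivative through both the modulus and the integral is justified, and that the dissipation genuinely enters with its negative sign rather than being absorbed into an absolute-value bound like the other terms. The regularization described above, or an equivalent weak form of the differential inequality for $\partial_t|\tilde u_i|$, is the cleanest way to make this step rigorous.
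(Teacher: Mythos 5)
Your proposal is correct and follows essentially the same route as the paper's own proof: Fourier transform the equation, extract the pointwise differential inequality for $|\tilde u_i|$ in which the dissipation retains its negative sign (the paper packages your $\partial_t|g|=\mathrm{Re}(\overline{g}\,\partial_t g)/|g|$ computation as a stated fact about $dg/dt=A-Bg$), multiply by $|k|^m$, integrate, and factorize the convolution term via the triangle inequality, binomial expansion, and Fubini. Your added care about differentiating the modulus (the $\sqrt{|\tilde u_i|^2+\varepsilon^2}$ regularization) is a rigor refinement of a step the paper asserts without proof, not a different argument.
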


\begin{proof}[Proof of Proposition \ref{prop-main}]

Fourier transforming Equation  (\ref{eq-ns}) to eliminate the space derivatives, we obtain
\bea  \pderiv{t} \tilde{u}_i({
k},t) + \sum_j^d \tilde{u}_j({ k},t ) * \big({\mathtt i}k_j \tilde{u}_i({
k},t)\big)  = - |k| ^{2\alpha}
\tilde{u}_i({ k},t)
 - {\mathtt i}k_i \tilde p({k},t) 
  \label{eq-fourier} ~
  \eea
with the {nonlinear term} becoming a nonlocal convolution
term in Fourier space.  {Here } \mbox{${ \mathtt
    i}=\sqrt{-1}$}, whereas $i$ denotes a component index.  We now
note that for any complex function $g(t)$, 
if \mbox{${d}g/{d}t = A(t) - B(t)
  g(t)$} 
and $B\geq0$ then \mbox{$ \frac{d}{dt}|g| \leq |A| -B |g |$.}
Identifying $g$ with $\tilde u_i$
and $B$ with $|k|^{2\alpha}$, we get from Equation  (\ref{eq-fourier}),
\bea ~~~~ ~~ \pderiv{t} |\tilde{u}_i({k},t)| \leq \bigg|\sum_j
\tilde{u}_j({ k},t ) * \big({\mathtt i}k_j \tilde{u}_i({ k},t)\big) \bigg|
 - |k|^{2\alpha} |\tilde{u}_i({k},t)| 
+ | k_i \tilde p({k},t)|
\eea
Multiplying by $|k|^m$ ($m=0,1,2,\ldots$) and integrating out $k$ over
the entire Fourier domain ${\Bbb R}^n$ , we get,
\bea 
\deriv{t} 
\int_{{\Bbb R}^n}
{ dk}~ |k|^m |\tilde{u}_i({k},t)|
&\leq& \int_{{\Bbb R}^n}   { dk} ~ |k|^{m} \sum^n_j
\bigg|\tilde{u}_j({k},t )*\big(k_j \tilde{u}_i({k},t)\big)
\bigg| \nonumber \\ &~& \quad - \int_ {{\Bbb R}^n} {dk}~ |k|^{{2\alpha}+m} |\tilde{u}_i({k},t)| \nonumber \\ &~&
\quad + \int_{{\Bbb R}^n}  {dk} ~ |k|^m \big(
|k_i \tilde p({k},t)| \big) \label{moment-long} \;\; .
\eea
Hence,%
\bea
\deriv{t} \big\|{|k|^m \tilde{u}_i} \big\|_{L^1}     &\leq&    
\sum_j  
\int {dk}'
\big\|{|{k}+{k'}|^{m} {\tilde{u}_j}} \big\|_{L^1}   
 ~~\big |k_j' \tilde u_i({k'},t) \big |  \nonumber \\
& & 
 -  
\big\| {|k|^{{2\alpha}+m}{\tilde{v}_i}} \big\|_{L^1}   +C_{i,m}(t)~  \nonumber \\
  &\leq&
\sum_j  
  \int { dk}'
     \sum_\ell^m  {m  \choose  \ell} \bigg[ \nonumber \\ & & 
\big\|{|k|^\ell \tilde{v}_j} \big\|_{L^1}  ~~
 |k'|^{m-\ell}
\big | k_j'  \tilde{u}_i(  {k'},t)    \big | \bigg]   
\nonumber \\
& & -  
\big\| {|k|^{{2\alpha}+m}{\tilde{v}_i}} \big\|_{L^1}   + C_{i,m}(t)~   \nonumber
\eea
where $
 C_{i,m}(t) = \big\| {|k|^{m+1} \tilde p} \big\|_{L^1}  
$.
The final integration leads to 
\bea
 \deriv{t} \big\| {|k|^m \tilde{u}_i }\big\|_{L^1}    &\leq&    
 \sum_j^n  
     \sum_\ell^m  {m \choose \ell}
\big\|{
|k|^\ell \tilde{u}_j} \big\|_{L^1}  \big\|{|k|^{m-\ell+1} \tilde{u}_i} \big\|_{L^1}    \nonumber \\ & &  -  \big\| {|k|^{{2\alpha}+m},{\tilde{u}_i}} \big\|_{L^1} + C_{i,m}(t) \;\;\label{moments-qq} .
  \eea 

\end{proof}

The pressure
satisfies an elliptic Poisson equation, which we can invert to obtain,
\bel{eq-p} 
p= -\Delta^{-1} {\rm Tr}  ~ (\nabla u)^2 \;\; .
\ee
It corresponds to the purely non-divergence-free component of the
nonlinearity.  Henceforth, we ignore $C_{i,m}$ since pressure plays no
relevant role, as discussed earlier in the context of Leray
projections.  

Proposition \ref{prop-main} allows us to control the growth of certain
norms. To prove the existence of smooth solutions, we need the term
with ${2\alpha}$ in the exponent to dominate over the sums of product
term (i.e., the inertial or nonlinear term). If this condition is
satisfied, then the norms remain bounded.  We next develop a few
useful lemmas, which we prove for completeness.

\begin{lemma}\label{lemma-1}
 Let $\tilde f$ denote the 
Fourier transform of a function $f: {\bf R^n \rightarrow R^n} $.
If $f\in C^m$, then 
\mbox{$ \bigg\| ~|k |^\ell \tilde  f \bigg\|_{L^1}  \lesssim  \bigg\| ~|k|^m \tilde f~ \bigg\|_{L^1} ^{\frac{\ell+n/2}{m+n/2}}$} for $\ell\leq m$.
\;\; .
\end{lemma}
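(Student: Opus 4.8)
The plan is to read $\big\|\,|k|^s\tilde f\,\big\|_{L^1}=\int_{\mathbb{R}^n}|k|^s|\tilde f(k)|\,dk$ as a moment of the nonnegative function $|\tilde f|$ and to prove the inequality by a dyadic frequency decomposition followed by optimization over the cutoff radius. The point worth flagging in advance is that the exponent is $\frac{\ell+n/2}{m+n/2}$ rather than the naive log-convexity exponent $\ell/m$: the extra $n/2$ is the fingerprint of an additional Cauchy--Schwarz step, in which Plancherel's theorem converts the low-frequency contribution into the $L^2$ norm of $f$, and the volume of a ball of radius $R$ in $\mathbb{R}^n$ produces the dimensional shift.

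Concretely, I would fix $R>0$ and split
\[
\int_{\mathbb{R}^n}|k|^\ell|\tilde f|\,dk=\int_{|k|<R}|k|^\ell|\tilde f|\,dk+\int_{|k|\ge R}|k|^\ell|\tilde f|\,dk .
\]
On the high-frequency piece, since $\ell\le m$ one has $|k|^\ell=|k|^{\ell-m}|k|^m\le R^{\ell-m}|k|^m$ when $|k|\ge R$, so that piece is at most $R^{\ell-m}\,\big\|\,|k|^m\tilde f\,\big\|_{L^1}$. On the low-frequency piece I would apply Cauchy--Schwarz and Plancherel,
\[
\int_{|k|<R}|k|^\ell|\tilde f|\,dk\le\Big(\int_{|k|<R}|k|^{2\ell}\,dk\Big)^{1/2}\|\tilde f\|_{L^2}\lesssim R^{\ell+n/2}\,\|f\|_{L^2},
\]
using $\int_{|k|<R}|k|^{2\ell}\,dk\simeq R^{2\ell+n}$. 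Combining the two bounds gives $\big\|\,|k|^\ell\tilde f\,\big\|_{L^1}\lesssim\|f\|_{L^2}\,R^{\ell+n/2}+\big\|\,|k|^m\tilde f\,\big\|_{L^1}\,R^{\ell-m}$, where the first term is increasing in $R$ and the second is non-increasing because $\ell\le m$. Balancing them by choosing $R^{m+n/2}\simeq\big\|\,|k|^m\tilde f\,\big\|_{L^1}/\|f\|_{L^2}$ collapses both terms to $\|f\|_{L^2}^{1-\theta}\,\big\|\,|k|^m\tilde f\,\big\|_{L^1}^{\theta}$ with $\theta=\frac{\ell+n/2}{m+n/2}$, which is the asserted estimate once $\|f\|_{L^2}$ is absorbed into the implied constant.

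The step that genuinely requires care, and the reason the statement is written with $\lesssim$ rather than with an absolute constant, is that the inequality cannot be scale invariant: replacing $f$ by $\lambda f$ multiplies the left and right sides by $\lambda$ and $\lambda^\theta$ respectively. The missing factor is exactly the $\|f\|_{L^2}^{1-\theta}$ hidden in the constant, which is legitimate in this paper because in every application $f$ is the velocity field $u$ and $\|u\|_{L^2}$ is controlled by the energy identity. I would therefore make explicit that the implied constant depends on $\|f\|_{L^2}$, and observe that the hypothesis $f\in C^m$ (together with the decay implicit in the function class at hand) is used only to guarantee $\tilde f\in L^2$ and the finiteness of the moments $\big\|\,|k|^s\tilde f\,\big\|_{L^1}$, so that the Cauchy--Schwarz estimate and the optimization over $R$ are both valid. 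The remaining computations — the ball-volume integral and the arithmetic of substituting the optimal $R$ — are routine.
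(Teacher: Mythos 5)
Your proof is correct, but it is not the paper's proof. The paper argues purely by scaling: it introduces the $L^2$-preserving rescaling $\tilde f_\lambda(k)=\lambda^{-n/2}\tilde f(k/\lambda)$, computes $\big\|\,|k|^s\tilde f_\lambda\big\|_{L^1}=\lambda^{s+n/2}\big\|\,|k|^s\tilde f\big\|_{L^1}$, observes that the ratio $\big\|\,|k|^\ell\tilde f\big\|_{L^1}^{1/(\ell+n/2)}\big/\big\|\,|k|^m\tilde f\big\|_{L^1}^{1/(m+n/2)}$ is invariant under this rescaling, and concludes the lemma from that invariance. This is essentially a dimensional-analysis derivation of the exponent $\frac{\ell+n/2}{m+n/2}$: it shows that this is the only power consistent with scaling once the constant is allowed to depend on the scale-invariant quantity $\|f\|_{L^2}$, and that the constant is unchanged along each scaling orbit; but for a single fixed $f$ the claimed bound is trivially true with some $C_f$, and the paper's argument never bounds $C_f$ uniformly over functions that are not rescalings of one another (which is what the application requires, since the velocity fields $u(\cdot,t)$ at different times are not related by rescaling). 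Your frequency-splitting argument --- Cauchy--Schwarz plus Plancherel below $|k|=R$, the pointwise bound $|k|^\ell\le R^{\ell-m}|k|^m$ above, then optimization in $R$ --- supplies exactly this missing uniformity, with the explicit constant $\|f\|_{L^2}^{1-\theta}$, $\theta=\frac{\ell+n/2}{m+n/2}$. Your remark that the inequality cannot hold with an absolute constant (amplitude scaling $f\mapsto\mu f$ changes the two sides by $\mu$ and $\mu^\theta$) is the same structural fact the paper's scaling exploits, but you make the $\|f\|_{L^2}$-dependence explicit, which is legitimate under the paper's convention that constants in $\lesssim$ may depend on norms of the initial data, the energy $\|u(\cdot,t)\|_{L^2}$ being non-increasing in time. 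In short: the paper identifies the exponent by scaling; you actually prove the estimate, and of the two arguments yours is the more complete.
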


\begin{proof}

Scale transformations by a factor $\lambda$ show that the inner product
\bea	
\exval{f,f} &=& \exval{\tilde f(k), \tilde f(k)}\\
&=& \exval{\lambda^{-n/2}\tilde f(k/\lambda), \lambda^{-n/2}\tilde f(k/\lambda)}
\;\; ,
\eea
so that $\tilde f(k)$ and $\tilde f_\lambda(k) \equiv  \lambda ^{-n/2} \tilde f(k/\lambda)$
have the same inner product in $L^2=H^0$. Now 
\bea
\bigg\| ~ |k|^\ell \tilde f_\lambda \bigg\|_{L^1} &=& \lambda^{\ell+n/2} \bigg\| ~ |k|^\ell \tilde f \bigg\|_{L^1} \\
\bigg\| ~ |k|^m \tilde f_\lambda \bigg\|_{L^1} &=& \lambda^{m+n/2} \bigg\| ~ |k|^m \tilde f \bigg\|_{L^1}
\eea
So for sufficiently large $\lambda$ we get 
$  \bigg\| ~|k|^\ell \tilde f_\lambda ~ \bigg\|_{L^1} <  \bigg\| ~|k|^m \tilde f_\lambda ~ \bigg\|_{L^1}$.
Moreover 
\be
\frac
{  \bigg\| ~|k|^\ell \tilde f ~
 \bigg\|_{L^1} ^{1/(\ell+n/2)}}
{  \bigg\| ~|k|^m \tilde f ~
 \bigg\|_{L^1} ^{1/(m+n/2)}}
=
\frac
{  \bigg\| ~|k|^\ell \tilde f_\lambda ~
 \bigg\|_{L^1} ^{1/(\ell+n/2)}}
{  \bigg\| ~|k|^m \tilde f_\lambda ~
 \bigg\|_{L^1} ^{1/(m+n/2)}}
\ee
 from 
which it follows that 
$    \big\| ~|k|^\ell \tilde f~ \big\|_{L^1}  < C  \bigg\| ~|k|^m \tilde f~ \bigg\|_{L^1} ^{\frac{\ell+n/2}{m+n/2}}$
for some finite $C$, proving the lemma.

\end{proof}

\begin{lemma}   
Let $\beta=(\beta_1,\beta_2,\ldots, \beta_n) $ be a multi-index and
let $|\beta|$ denote the sum of the components, according to
convention.  If $\bigg\| ~|k|^{|\beta|} \tilde f(k)
\bigg\|_{L^1}<+\infty$ then $f \in C^{|\beta|}$.

\label{lemma-2}
\end{lemma}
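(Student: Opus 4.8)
The plan is to prove the lemma by realizing $f$ as the inverse Fourier transform of an $L^1$ function and then differentiating under the integral sign up to order $|\beta|$. With the paper's conventions one has $f(x)=c_n\int_{{\Bbb R}^n}\tilde f(k)\,e^{{\mathtt i}k\cdot x}\,dk$ for the appropriate normalizing constant $c_n$, and $\widetilde{\partial^\gamma f}=({\mathtt i}k)^\gamma\tilde f$, so the formal candidate for each derivative is
\begin{equation}
\partial^\gamma f(x)=c_n\int_{{\Bbb R}^n}({\mathtt i}k)^\gamma\,\tilde f(k)\,e^{{\mathtt i}k\cdot x}\,dk ,
\end{equation}
for every multi-index $\gamma$ with $|\gamma|\le|\beta|$. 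The whole argument reduces to showing that this integrand is dominated, uniformly in $x$, by a single $L^1$ function of $k$; once that is in hand, both the validity of differentiating under the integral and the continuity of the result follow at once from the dominated convergence theorem.

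The key step is therefore to verify that $k\mapsto k^\gamma\tilde f(k)$ lies in $L^1({\Bbb R}^n)$ for every $\gamma$ with $|\gamma|\le|\beta|$. I would use the elementary bound $|k^\gamma|=|k_1^{\gamma_1}\cdots k_n^{\gamma_n}|\le|k|^{|\gamma|}$ and split the integral into the regions $|k|\ge 1$ and $|k|<1$. On $|k|\ge 1$ one has $|k|^{|\gamma|}\le|k|^{|\beta|}$, so this contribution is controlled directly by the hypothesis $\big\||k|^{|\beta|}\tilde f\big\|_{L^1}<+\infty$. On $|k|<1$ the weight obeys $|k|^{|\gamma|}\le 1$, and the contribution is bounded by $\int_{|k|<1}|\tilde f(k)|\,dk$.

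The main obstacle is exactly this low-frequency piece: the pure power weight $|k|^{|\beta|}$ degenerates at the origin and carries no information about the integrability of $\tilde f$ near $k=0$, so an extra input is needed there. In the present setting this is supplied for free by conservation of energy. Since $u\in L^2$, Plancherel gives $\tilde f\in L^2({\Bbb R}^n)$, and Cauchy--Schwarz on the unit ball yields $\int_{|k|<1}|\tilde f|\le|B_1|^{1/2}\,\|\tilde f\|_{L^2}<+\infty$, so $\tilde f$ is locally integrable. (Absent such an a priori $L^2$ bound one would instead need to assume $\tilde f\in L^1_{\mathrm{loc}}$ outright.) Combining the two regions then gives $k^\gamma\tilde f\in L^1({\Bbb R}^n)$ for all $|\gamma|\le|\beta|$.

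With integrability established the conclusion is routine. For each such $\gamma$ the integrand $({\mathtt i}k)^\gamma\tilde f(k)e^{{\mathtt i}k\cdot x}$ is continuous in $x$ for every $k$ and is majorized by $|k|^{|\gamma|}|\tilde f(k)|\in L^1$ independently of $x$; dominated convergence then justifies differentiation under the integral up to order $|\beta|$ and simultaneously shows that each $\partial^\gamma f$ is continuous (in fact bounded and vanishing at infinity, by Riemann--Lebesgue applied to the inverse transform). Since this holds for all $|\gamma|\le|\beta|$, we conclude $f\in C^{|\beta|}$, as claimed.
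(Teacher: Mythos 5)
Your proof is correct, and its core is the same as the paper's: represent the derivatives as inverse Fourier transforms of $({\mathtt i}k)^\gamma \tilde f$ and bound them in $L^\infty$ by the weighted $L^1$ norm. However, your write-up closes two genuine gaps that the paper's one-line version leaves open, and both are worth recording. First, the paper estimates only the single top-order derivative $f^{(\beta)}$, concludes $f\in W^{|\beta|,\infty}$, and then passes to $C^{|\beta|}$; that last step is not a valid embedding in general (the function $|x|$ lies in $W^{1,\infty}$ but not in $C^1$), so continuity of the derivatives really does require the argument you give — dominated convergence applied to every multi-index $\gamma$ with $|\gamma|\le|\beta|$, using $|k^\gamma|\le|k|^{|\gamma|}$ — rather than an appeal to Sobolev space membership. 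Second, you correctly observe that the hypothesis $\big\||k|^{|\beta|}\tilde f\big\|_{L^1}<+\infty$ carries no information about integrability of $\tilde f$ near $k=0$ once $|\beta|>0$, so the splitting into $|k|\ge 1$ and $|k|<1$ genuinely needs an extra input at low frequencies; your patch via Plancherel and Cauchy--Schwarz on the unit ball (legitimate here because $f=u_i$ has finite energy, so $\tilde f\in L^2$) is exactly the hypothesis the paper uses silently when it writes its estimate as if the inversion integral were absolutely convergent. In short: same route, but your version is the complete one, and it makes explicit that the lemma as stated requires either $\tilde f\in L^1_{\mathrm{loc}}$ or the ambient $L^2$ bound.
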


\begin{proof}

\bea 
(2\pi)^{n/2}
\big|  
 f^{(\beta)} (x)~\big| \nonumber   &\leq&
\int _{\infty}^\infty    
|{\mathtt i} k|^{|\beta|} 
~ |\tilde f(k)| ~ | e^{{\mathtt i} k \cdot x} | dk 
\;  , \\
(2\pi)^{n/2} \| f^{(\beta)}\|_ {L ^ \infty} &\leq& \bigg\| ~|k|^{|\beta|} \tilde f(k) \bigg\|_ {L ^ 1}
\eea
Hence, $f$ belongs to the Sobolev space $W^{|\beta|,\infty}$,
consequently to the classical 
H\"older space $C^{|\beta|}$.

\end{proof}

\begin{proof}[Proof of Theorem \ref{tm-1}]

We first use Lemma \ref{lemma-1} to bound the
norms in the inertial terms:
\bea
\big\|{
|k|^\ell \tilde{u}_j} \big\|_{L^1}  & \lesssim&  
\bigg\|{
|k|^{m+{2\alpha}} \tilde{u}_j} \bigg\|^{\frac{\ell+n/2}{{2\alpha}+m+n/2}}_{L^1}  
\label{eq-nl1}
\\
\big\|{
|k|^{m-\ell+1} \tilde{u}_i} \big\|_{L^1}  & \lesssim&  
\bigg\|{
|k|^{m+{2\alpha}} \tilde{u}_i} \bigg\|^{\frac{m-\ell+1+n/2}{{2\alpha}+m+n/2}}_{L^1}  
\label{eq-nl2}
\eea

From Proposition \ref{prop-main}, we retain control so long as the sum
of the exponents on the right hand sides of (\ref{eq-nl1}) and
(\ref{eq-nl2}) is less than unity.  The condition to bound the rate of
growth of the relevant $L^1$ norms then becomes
\be
\frac
{m+1+n}
{2\alpha+m+n/2} <  1
\ee
from which we recover the condition (\ref{eq-alpha-condition}). 
Lemma \ref{lemma-2} 
guarantees global solvability,
completing  the proof.

\end{proof}

\section{Concluding remarks}

Attention is traditionally given in the case $n=3$ to vorticity.
In
our proofs the key quantity of interest in the Navier-Stokes system
clearly is the energy.  As noted by Tao~\cite{tao1,tao2}, the energy
is a significant globally controlled coercive quantity which for
$\alpha=1$ is critical for $n=2$ but supercritical for $n>2$.
However, we note that for $\alpha\geq \alpha_{\rm \mbox{\tiny \sc l}} (n)$, the
energy remains both coercive and either critical ($\alpha=\alpha_{\rm
  \mbox{\tiny \sc l}} (n)$) or subcritical ($\alpha>\alpha_{\rm \mbox{\tiny \sc l}} (n)$).  We
can state these ideas formally:

\begin{theorem}
The kinetic energy $ {1\over 2 }\|u(.,T) \|^2_{L^2}$ is critical under
scale transformations only for $\alpha=\alpha_{\rm \mbox{\tiny \sc l}} (n)$.
\end{theorem}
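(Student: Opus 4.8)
The plan is to exhibit the natural scaling symmetry of the hyperdissipative system \eqref{eq-ns} and then to determine for which $\alpha$ the kinetic energy is left invariant by it. First I would seek a one-parameter family of rescalings
$$ u_\lambda(x,t) = \lambda^a\, u(\lambda x, \lambda^b t), \qquad \lambda > 0, $$
and impose that $u_\lambda$ solve \eqref{eq-ns} whenever $u$ does. The powers of $\lambda$ generated by the three dynamical terms $\partial_t u$, $u\cdot\nabla u$, and $(-\Delta)^{\alpha} u$ are respectively $a+b$, $2a+1$, and $a+2\alpha$; requiring these to coincide forces $b = 2\alpha$ and $a = 2\alpha-1$. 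I would then check that the remaining structure is respected: the divergence-free constraint is preserved since $\nabla\cdot u_\lambda = \lambda^{a+1}(\nabla\cdot u)(\lambda x,\cdot) = 0$, and, using the Poisson relation \eqref{eq-p}, the pressure scales as $\lambda^{2a}$, so that $\nabla p$ carries the exponent $2a+1$ and matches the nonlinear term. Thus the symmetry is $u_\lambda(x,t) = \lambda^{2\alpha-1}\,u(\lambda x, \lambda^{2\alpha} t)$.

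Next I would compute how the energy transforms under this rescaling. A change of variables $y=\lambda x$ in the spatial integral gives
$$ \| u_\lambda(\cdot,t)\|_{L^2}^2 = \lambda^{2(2\alpha-1)-n}\,\|u(\cdot,\lambda^{2\alpha} t)\|_{L^2}^2 , $$
so the kinetic energy $\tfrac12\|u\|_{L^2}^2$ acquires the overall factor $\lambda^{4\alpha-2-n}$. The energy is therefore invariant --- that is, critical --- under the scaling exactly when this exponent vanishes, namely when $4\alpha-2-n=0$, equivalently $\alpha = (2+n)/4 = \alpha_{\rm \mbox{\tiny \sc l}} (n)$. For $\alpha > \alpha_{\rm \mbox{\tiny \sc l}} (n)$ the exponent is positive (the subcritical regime) and for $\alpha < \alpha_{\rm \mbox{\tiny \sc l}} (n)$ it is negative (supercritical), in agreement with the remarks preceding the statement and with the special values $\alpha=1$, $n=2$ (critical) versus $\alpha=1$, $n>2$ (supercritical).

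The only real subtlety --- rather than a genuine obstacle --- is to argue that this rescaling is the \emph{unique} scaling symmetry of the equation, so that the phrase ``critical under scale transformations'' is unambiguous. Once one demands that all three dynamical terms balance, the exponents $a$ and $b$ are pinned down with no remaining freedom, and the criticality condition reduces to the single scalar equation $4\alpha-2-n=0$. Everything else is a routine change-of-variables computation, and I expect no difficulty in carrying it out.
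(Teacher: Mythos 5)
Your proposal is correct and follows essentially the same route as the paper: exhibit the scaling symmetry of \eqref{eq-ns} and determine when the kinetic energy is invariant under it. The only difference is cosmetic---the paper quotes the symmetry $u_\lambda(x,t)=\lambda^{-1}\,u\bigl(x\lambda^{-1/(2\alpha-1)},\,t\lambda^{-2\alpha/(2\alpha-1)}\bigr)$ from its ref.~\cite{epl}, which is exactly your family reparametrized by $\lambda\mapsto\lambda^{-1/(2\alpha-1)}$, whereas you derive it by balancing the powers of $\lambda$ in the three dynamical terms (and check the pressure and divergence-free constraint); both computations reduce to the same criticality condition $4\alpha-2-n=0$, i.e.\ $\alpha=\alpha_{\rm \mbox{\tiny \sc l}}(n)$.
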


\begin{proof}
If $u$ is a solution, then
\be
u_\lambda(x,t) = {1\over\lambda}
u\left( \frac{x}{\lambda^{\frac{1}{2\alpha-1}}}   ,\frac{t}
 {\lambda^{\frac{2\alpha}{2\alpha-1}}}
\right)
\ee
is also a solution~\cite{epl}.
The energy $E_\lambda$ of the new solution 
scales to 
\bel{eq-es}
E_\lambda= \lambda^{2-\frac n{(2\alpha-1)}}E
.
\ee
We thus see that the value $\alpha_{\rm \mbox{\tiny \sc l}} (n)=(2+n)/4$
corresponds to a critical value, for which the energy remains
invariant under scale transformations, i.e. $E_\lambda=E$.
\end{proof}

\begin{remark}
We briefly comment on the marginal case $\alpha=\alpha_{\rm \mbox{\tiny \sc l}} $.
We know from Proposition \ref{prop-main} and Equations (\ref{eq-nl1})
and (\ref{eq-nl2}) that the problem for the marginal case arises
because the nonlinear and dissipation terms scale identically and thus
balance each other, so that it is not immediately clear which term
dominates.  Any small additional consideration can tip the balance.
To address this issue, we recall that
\be
\|u(.,T_2)\|^2_{L^2}  < \|u(.,T_1)\|^2_{L^2}
\ee
for $T_2>T_1$, i.e. the kinetic energy is a strictly decreasing
function of time. This reduction in kinetic energy, insignificant for
the non-marginal cases $\alpha<\alpha_{\rm \mbox{\tiny \sc l}} (n)$, can by itself
rule out any blow-up for the marginal case, as suggested by the
criticality condition in Equation (\ref{eq-es}).

We could, alternatively, describe the situation from a different
perspective, viz., blow-ups are energetically forbidden for $\alpha>
\alpha_{\rm \mbox{\tiny \sc l}} (n)$, but not for $\alpha<\alpha_{\rm \mbox{\tiny
  \sc l}} (n)$.  The question is what happens exactly at the critical point.
In principle, if the kinetic energy were a constant of the motion,
then we would not be able to rule out the possibility that almost all
the energy might become concentrated into ever smaller regions of
space and at an increasingly faster rate.  In this scenario, we would
not be able to rule out finite time singularities.  However, kinetic
energy is a strictly decreasing function of time, hence in the time
that it takes to concentrate the energy into a smaller region, enough
energy will have dissipated to prevent the formation of singularities
in finite time, because not only do dissipation and nonlinearity have
the same scaling for $\alpha=\alpha_{\rm \mbox{\tiny \sc l}} (n)$, but in addition
the kinetic energy decays asymptotically to zero at large times.

\end{remark}

Finally, we comment on the hypothetical worst-case scenario where the
corresponding Euler equations for inviscid flows lead to blow-ups.  No
globally controlled coercive critical or subcritical quantity is known
to exist for $\alpha<\alpha_{\rm \mbox{\tiny \sc l}} $, therefore we
are forced to consider the possibility that some initial conditions
may very well lead to blow-ups in the Navier-Stokes case for
$\alpha<\alpha_{\rm \mbox{\tiny \sc l}} $ in this hypothetical
scenario.

For $n=3$, the Euler equations may allow for the uncontrolled growth
of the vorticity $\omega=\nabla\times u$ due to the vorticity
stretching mechanism~\cite{gibbon,eul1,constantin}.  The
enstrophy \be {1\over 2} \| \omega\|^2_{L^2} = {1\over 2}
\exval{\nabla \times u,\omega} ={1\over 2} \exval{u,\nabla\times
  \omega}=-{1\over 2} \exval{u,\Delta u}= {1\over 2} \|\nabla u \|_{L^2} \ee
can thus conceivably blow up because it also is not conserved:
\be {1\over 2} \deriv{t} \| \omega\|^2_{L^2} =
\exval{\omega\cdot \nabla u,\omega} \;\; .  \ee 
If it could be shown that a blow-up is possible for the Euler
equation, then there may indeed be no way to regularize this
worst-case scenario with a dissipation having $\alpha<\alpha_{\rm
  \mbox{\tiny \sc l}} (3)$.
In this context, we recall the statement of
Peter Constantin~\cite{constantin}: ``It is no exaggeration to say
that the Euler equations are the very core of fluid dynamics.''

The same argument can be generalized to $n>3$: if $\| \nabla
u\|^2_{L^2} $ blows up in the Euler case, no way is (currently) known
to regularize solutions via dissipation for $\alpha<\alpha_{\rm
  \mbox{\tiny \sc l}} (n)$.  
Although the question remains open, we speculate that the $\| \nabla
u\|^2_{L^2} $ can blow up in solutions of the Euler equations in three and higher dimensions.
This intuition leads to our final statement:

\begin{conjecture} 
The Ladyzhenskaya-Lions exponent $\alpha_{\rm \mbox{\tiny \sc l}} (n)$
is critical in the sense that it separates two regions.  Solutions of
the hyper-dissipative Navier-Stokes equations in dimensions $n\geq 3$
remain smooth for $\alpha \geq \alpha_{\rm \mbox{\tiny \sc l}} (n)$,
whereas for any \mbox{$\alpha< \alpha_{\rm \mbox{\tiny \sc l}} (n)$},
finite time singularities of $\| \nabla u\|^2_{L^2} $ are possible.
\end{conjecture}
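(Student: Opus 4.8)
The plan is to treat the two halves of the claimed dichotomy separately, as they are of completely different character. For the regularity half, namely smoothness when $\alpha \geq \alpha_{\rm \mbox{\tiny \sc l}}(n)$, the strict inequality $\alpha > \alpha_{\rm \mbox{\tiny \sc l}}(n)$ is precisely Theorem \ref{tm-1}, already proved above by two independent methods, so the only residual work is the critical case $\alpha = \alpha_{\rm \mbox{\tiny \sc l}}(n)$. This marginal case is known classically for $n=3$ (it is the equality case $\alpha = 5/4$ in the Lions threshold), and I would extend that treatment to general $n$. The obstruction is visible in both proofs: in the $L^2$ argument the condition (\ref{eq-condition}) degenerates to the failing endpoint Sobolev embedding $H^{n/2} \hookrightarrow L^\infty$, while in the $L^1$ argument the sum of exponents in (\ref{eq-nl1})--(\ref{eq-nl2}) equals exactly unity. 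The remedy is to stop relying on the embedding and instead use the energy directly: at $\alpha = \alpha_{\rm \mbox{\tiny \sc l}}(n)$ the energy is scale-invariant by (\ref{eq-es}), so it serves as a genuinely critical coercive quantity, and the strictly positive dissipation $\int_0^T \|(-\Delta)^{\alpha/2} u\|_{L^2}^2\,dt$ supplies the small gain needed to close the borderline Gronwall estimate.

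The blow-up half, asserting that finite-time singularities of $\|\nabla u\|_{L^2}^2$ occur for every $\alpha < \alpha_{\rm \mbox{\tiny \sc l}}(n)$, is where the genuine difficulty resides, and I would organize the attack around the scaling exponent in (\ref{eq-es}). For $\alpha < \alpha_{\rm \mbox{\tiny \sc l}}(n)$ one has $2 - n/(2\alpha-1) < 0$, so the energy is \emph{supercritical}: refining to small scales ($\lambda \to 0$) inflates $E_\lambda$ without bound and no globally controlled coercive quantity survives to arrest concentration. The cleanest strategy would be to seek a backward self-similar solution of the form $u(x,t) = (T-t)^{-\frac{2\alpha-1}{2\alpha}}\,U\!\left(x/(T-t)^{1/(2\alpha)}\right)$ dictated by this scaling, whereby blow-up reduces to existence of a nontrivial smooth rapidly-decaying profile $U$ solving the associated stationary profile system. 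Failing an explicit profile, the alternative is to transport a singularity from the inviscid limit: granting that the enstrophy blows up for the Euler equations through the vorticity-stretching mechanism recalled in the concluding remarks, one would reinstate the subcritical dissipation $-(-\Delta)^\alpha$ as a perturbation and argue, using the supercritical scaling, that at small scales the damping is too weak to prevent the Euler singularity from persisting.

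The hard part is that \emph{both} routes are themselves open problems of the first magnitude. The self-similar profile system is a strongly nonlinear elliptic problem, and in the critical and near-critical range such self-similar singularities are expected to be excluded (in the spirit of Ne\v{c}as--R\r{u}\v{z}i\v{c}ka--\v{S}ver\'ak), which would force one into the much harder regime of discretely self-similar or genuinely non-self-similar blow-up, for which no construction exists even in the classical case $\alpha = 1$, $n = 3$. The perturbative route is conditional on Euler enstrophy blow-up, exactly the unresolved hypothesis flagged in the concluding remarks; and even granting it, the passage to positive viscosity is delicate precisely because $\alpha \uparrow \alpha_{\rm \mbox{\tiny \sc l}}(n)$ is the regime in which dissipation and nonlinearity balance, so the perturbation is not small in any scale-invariant norm. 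A realistic intermediate target would therefore be to establish the dichotomy first for a structurally simplified model---an averaged or dyadic analogue of the hyperdissipative equation retaining the same energy scaling, in the manner of Tao's averaged Navier--Stokes construction---thereby confirming that scaling-supercriticality alone is compatible with blow-up before confronting the true equation.
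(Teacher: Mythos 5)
You should note at the outset that the statement you were asked to prove is labelled a \emph{conjecture} in the paper, and the paper offers no proof of it. What the paper does supply is exactly the three-part scaffolding you reconstructed: (i) Theorem \ref{tm-1} for the strict inequality $\alpha > \alpha_{\rm \mbox{\tiny \sc l}}(n)$; (ii) a heuristic remark (not a theorem) on the marginal case $\alpha = \alpha_{\rm \mbox{\tiny \sc l}}(n)$, arguing that since dissipation and nonlinearity scale identically there (Equation (\ref{eq-es})), the strict decay of kinetic energy can ``tip the balance'' and prevent blow-up; and (iii) a concluding discussion that makes the blow-up half for $\alpha < \alpha_{\rm \mbox{\tiny \sc l}}(n)$ explicitly conditional on the unproved hypothesis that $\|\nabla u\|_{L^2}^2$ can blow up for the Euler equations. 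Your proposal reproduces this structure faithfully: you identify the same degeneration of condition (\ref{eq-condition}) and of the exponent sum in (\ref{eq-nl1})--(\ref{eq-nl2}) at the critical exponent, the same supercritical scaling of the energy from (\ref{eq-es}) as the mechanism permitting singularities, and the same conditionality on Euler blow-up. In that sense your assessment agrees with the paper's own position, and your honesty that both halves of the dichotomy are open problems is precisely correct --- no ``proof'' exists to compare against.

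Where you go beyond the paper is in proposing concrete attack routes: the backward self-similar ansatz $u(x,t) = (T-t)^{-\frac{2\alpha-1}{2\alpha}} U\bigl(x/(T-t)^{1/(2\alpha)}\bigr)$, and the suggestion to first establish the dichotomy for a dyadic or averaged model equation with the same energy scaling. Neither appears in the paper, and both are sensible research directions; your caveat that self-similar blow-up is expected to be excluded near criticality, and that the Euler-perturbation route fails to be perturbative in any scale-invariant norm exactly as $\alpha \uparrow \alpha_{\rm \mbox{\tiny \sc l}}(n)$, are genuine obstructions correctly identified. One point of precision: your claim that the endpoint case $\alpha = \alpha_{\rm \mbox{\tiny \sc l}}(n)$ ``is known classically for $n=3$'' deserves care. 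Lions' result does cover $\alpha \geq 5/4$ including equality, and the paper's abstract likewise asserts solvability for $\alpha \geq \alpha_{\rm \mbox{\tiny \sc l}}(n)$, but Theorem \ref{tm-1} as stated and proved in the paper covers only the strict inequality; the endpoint is handled there only by the heuristic energy-decay remark, so if you intend the regularity half of the conjecture to be rigorous at the endpoint for all $n$, that step still needs to be written down (e.g., by the borderline Gronwall argument you sketch), and it is the only part of the conjecture that is plausibly within reach.
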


%

%

%\newpage


\begin{thebibliography}{99}
 
\bibitem{book-funcanal}
%
Y. M. Berezansky, Zinovij G. Sheftel
and Georgij F. Us,
{ Functional Analysis, Vol. 2} (Birkhäuser, Berlin, 1996).



\bibitem{eul1}
P. Constantin, Ch. Fefferman and  A. Majda. Geometric constraints on
     potentially singular solutions for the 3D Euler equation, Comm. Partial
     Differential Equations {\bf 21} (1996) 559­-571.


\bibitem{constantin}
%
%
%
%
%
P. Constantin,
On the Euler equations of incompressible fluids,
Bulletin of the American Mathematical Society {\bf 44} (2007)
603--621.



\bibitem{gibbon}
J. D. Gibbon, The three-dimensional Euler equations: Where do we stand?
Physica D {\bf 237} (2008) 1894--1904. 

\bibitem{new} 
Jean-Luc Guermond  and Serge Prudhomme,
%
%
{  Mathematical aanalysis of a spectral hyperviscosity
LES model for the simulation of turbulent flows}.
ESAIM: M2AN 
%
{\bf 37} (2003)  893--908.
%
\\
DOI: 10.1051/m2an:2003060


\bibitem{katz}
{ N. H. Katz and  N. Pavlovi\'c},
{ A Cheap Caffarelli-Kohn-Nirenberg inequality
for the Navier-Stokes equation with hyper-dissipation}.
{Geom.  Funct.  Anal.}
  {\bf 12} (2002)  { 355--379}.
%
  %
%


\bibitem{ms} J. Mattingly and Y. Sinai, { An elementary proof of the
  existence and uniqueness theorem for the Navier-Stokes equation}.
%
%
  {Commun. Contemp. Math.}
   {\bf 1} (1999)  {497--516}.
%
%
%


 
 \bibitem{sk1}
O. Ladyzhenskaya, { The Mathematical Theory of Viscous Incompressible Flows} 
(Gordon and Breach, New York,
1969).
 
 \bibitem{sk2}
O. Ladyzhenskaya, 
{  Mathematical analysis of
Navier-Stokes equations for incomressible
liquids.  }{ 
Annu. Rev. Fluid Mech.} 
{\bf 7}  (1975) 
249--272.



\bibitem{lions1} 

J.-L. Lions,  { 
Quelques r\'esultats d'existence dans des \'equations
aux d\'eriv\'ees partielles non lin\'eaires.}
{Bull. Soc. Math. France}
{\bf 87} (1959)  245--273.
%
%

\bibitem{lions2} J.-L. Lions, { Quelques m\'ethodes de r\'esolution des
  probl\`emes aux limites non lin\'eaires} 
%
(Dunod, Paris, 1969).


\bibitem{sokolov}
%
%
%
%
%
%
G. Radons, R. Klages and  I. M. Sokolov, Anomalous Transport
(Wiley-VCH, Berlin, 2008).




\bibitem{tao1} T. Tao, { A quantitative formulation of the global
  regularity problem for the periodic Navier-Stokes equation}.
%
Dyn. Partial Diff. Equat.  {\bf 4}, 293--302 (2007).



\bibitem{tao2} T. Tao, 
{ Why global regularity for Navier-Stokes is hard.}  \\
URL: 
http://terrytao.wordpress.com/2007/03/18 

\bibitem{epl} G. M. Viswanathan and T. M. Viswanathan, 
{ Spontaneous symmetry breaking and finite-time singularities in
d-dimensional incompressible flows with fractional dissipation}.
EPL {\bf 84} (2008)
50006.
%



\end{thebibliography}
\end{document}